\newcommand{\R}[0]{\mathbb{R}}
\theoremstyle{remark}
\newtheorem{thm}{Theorem}
\newtheorem{lem}[thm]{Lemma}
\newtheorem{rem}[thm]{Remark}
     \let\gl=\lambda             
   \let\gL=\Lambda
\newcommand{\cH}{\mathcal{H}}
\newcommand{\cT}{\mathcal{T}}
\DeclareMathOperator{\E}{\mathds{E}}
\begin{document}

\title[Stein equation for weighted sums of independent $\chi^{2}$ distributions]{A note on Stein equation for weighted sums of independent $\chi^{2}$ distributions}
\author[Xiaohui Chen]{Xiaohui Chen}
\address{\newline Department of Statistics\newline
University of Illinois at Urbana-Champaign\newline
S. Wright Street, Champaign, IL 61820\newline
{\it E-mail}: \href{xhchen@illinois.edu}{\tt xhchen@illinois.edu}\newline 
{\it URL}: \href{http://publish.illinois.edu/xiaohuichen/}{\tt http://publish.illinois.edu/xiaohuichen/}
}
\author[Partha Dey]{Partha Dey}
\address{\newline Department of Mathematics\newline
University of Illinois at Urbana-Champaign\newline
S. Wright Street, Champaign, IL 61820\newline
{\it E-mail}: \href{psdey@illinois.edu}{\tt psdey@illinois.edu}\newline 
{\it URL}: \href{https://faculty.math.illinois.edu/~psdey/}{\tt https://faculty.math.illinois.edu/~psdey/}
}

\date{First version: \today}
\keywords{Stein equation, weighted sum of independent $\chi^{2}$ distributions}
\thanks{X. Chen's research is supported in part by NSF CAREER Award DMS-1752614 and a Simons Fellowship.}

\begin{abstract}
This note provides the Stein equation for weighted sums of independent $\chi^{2}$ distributions.
\end{abstract}
\maketitle

\section{Introduction}

Stein's method, first introduced in~\cite{stein1972}, is a powerful tool to bound the distance between a probability distribution and the Gaussian distribution. Over the past decades, Stein's method has been extended to other distributions including Poisson~\cite{chen1975}, exponential~\cite{Chatterjee11}, $\chi^{2}$~\cite{Gaunt2016}, and gamma~\cite{Luk1994}. At the core of Stein's method is a differential operator $\cT$, which generates a differential equation known as the {\it Stein equation} characterizing the target distribution $\pi$:
\begin{equation}
\label{eqn:Stein_eqn}
\cT f(x) = h(x) - \E_{Z \sim \pi}[h(Z)]
\end{equation}
for a collection of functions $h \in \cH$. On one hand, the Stein equation~\eqref{eqn:Stein_eqn} satisfies $\E_{Z \sim \pi}[\cT f(Z)] = 0$. On the other hand, if $\pi$ has an absolutely continuous density with suitable regularity conditions, then the Stein equation~\eqref{eqn:Stein_eqn} has a unique solution $f := f_{h}$ for any given piecewise continuous function $h$. Thus for any random variable $X \sim \nu$, if $\E_{X \sim \nu}[\cT f_{h}(X)] = 0$ over a rich class of functions $h \in \cH$, then $\nu = \pi$. Quantitatively, taking expectation on both sides of~\eqref{eqn:Stein_eqn} gives
\[
\E_{X \sim \nu}[\cT f_{h}(X)] = \E_{X \sim \nu}[h(X)] - \E_{Z \sim \pi}[h(Z)].
\]
In order to control the distance between $\nu$ and $\pi$, it is enough to estimate the quantity $\E_{X \sim \nu}[\cT f_{h}(X)]$ over $ h \in \cH$.

In this note, we derive a characterizing operator and the associated Stein equation for weighted sums of independent $\chi^{2}$ distributions. Such distributions arise as weak limits of degenerate $U$-statistics~\cite{Serfling1980}, which are useful in goodness-of-fit tests for distribution functions such as the Cram\'er-von Mises test statistic.

\section{Characterizing operator for $\chi^{2}$ distributions}
\label{sec:Stein_characterizing_operator_chisquares}

We first start with the characterizing operator for one $\chi^{2}$ distribution. Then we derive the general results for weighted sums of independent $\chi^{2}$ distributions.

\subsection{Characterizing operator for one $\chi^{2}$ distribution}

Let $Q\sim \chi^{2}_{p}$ and $\widetilde{Q} = Q-p$. Then the operator
\[ \widetilde{\cT}f(x)=2(x+p)f'(x)-xf(x) \]
is a characterizing operator for the centered $\chi^{2}_{p}$ distribution, in the sense that $\widetilde{Q}\sim \chi^{2}_{p}-p$ if and only if $\E(\widetilde{\cT}f(\widetilde{Q}))=0$ for all ``smooth'' function $f$. To prove that $\widetilde{Q}\sim \chi^{2}_{p}-p$ distribution indeed satisfies $\E(\widetilde{\cT}f(\widetilde{Q}))=0$ for all ``smooth'' function $f$, we use the following lemma.

\begin{lem}[Integration by parts formula]
\label{lem:gaussian_IBP_chisquare}
If $Q\sim \chi^{2}_{p}$, then 
\[ \E\left((Q-p)f(Q)\right) = \E\left( 2Qf'(Q)\right) \]
holds for any absolutely continuous function $f : \R \to \R$ such that the expectations $\E |f(Q)|, \E|Qf(Q)|, \E |Qf'(Q)|$ are finite. Equivalently, if $\widetilde{Q} = Q-p$, then 
\[ \E\left(\widetilde{Q}f(\widetilde{Q})\right) = \E\left( 2(\widetilde{Q}+p)f'(\widetilde{Q})\right). \]
\end{lem}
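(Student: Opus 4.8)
The plan is to prove the integration by parts formula directly from the density of the $\chi^2_p$ distribution. Recall that the density of $Q \sim \chi^2_p$ is $\rho_p(x) = c_p x^{p/2-1} e^{-x/2}$ for $x > 0$, where $c_p = (2^{p/2}\Gamma(p/2))^{-1}$. The key algebraic observation is the differential identity satisfied by this density: $2x\rho_p'(x) = (p - 2 - x)\rho_p(x)$, equivalently $\frac{\rd}{\rd x}\bigl(2x\rho_p(x)\bigr) = (p-x)\rho_p(x)$. This is exactly the factor structure that produces the operator $\widetilde{\cT}$.

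First I would write $\E\bigl(2Qf'(Q)\bigr) = \int_0^\infty 2x f'(x)\rho_p(x)\,\rd x$ and integrate by parts, moving the derivative off $f$ and onto $2x\rho_p(x)$. The boundary term is $\bigl[2xf(x)\rho_p(x)\bigr]_0^\infty$; at $x=0$ it vanishes because $x\rho_p(x) \to 0$ (since $x^{p/2} \to 0$ for $p \geq 1$), and at $x = \infty$ it vanishes because of the exponential decay $e^{-x/2}$ — here one uses the finiteness hypotheses on $\E|f(Q)|$ and $\E|Qf(Q)|$ together with absolute continuity of $f$ to justify that the boundary term at infinity is well-defined and zero along a suitable sequence, or more carefully, that $xf(x)\rho_p(x)$ is integrable with integrable derivative so its limit at $\infty$ exists and must be $0$. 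After integration by parts the integral becomes $-\int_0^\infty f(x)\frac{\rd}{\rd x}\bigl(2x\rho_p(x)\bigr)\rd x = -\int_0^\infty f(x)(p-x)\rho_p(x)\,\rd x = \E\bigl((Q-p)f(Q)\bigr)$, which is the claimed identity. The equivalent statement for $\widetilde Q = Q - p$ follows by the substitution $f(\widetilde Q) = f(Q - p)$, noting that the chain rule gives no extra factor since the shift has derivative one.

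The main obstacle — really the only nontrivial point — is making the boundary-term-at-infinity argument rigorous under the stated integrability hypotheses rather than under stronger smoothness/decay assumptions. The clean way is: since $f$ is absolutely continuous, $g(x) := 2xf(x)\rho_p(x)$ is absolutely continuous on $(0,\infty)$ with $g'(x) = 2f(x)\rho_p(x) + 2xf'(x)\rho_p(x) + 2xf(x)\rho_p'(x)$; the hypotheses $\E|f(Q)|, \E|Qf(Q)|, \E|Qf'(Q)| < \infty$ together with the density identity $2x\rho_p'(x) = (p-2-x)\rho_p(x)$ show $g' \in L^1(0,\infty)$, hence $\lim_{x\to\infty} g(x)$ exists; since $g \in L^1$ near infinity (again by the hypotheses) the limit must be $0$, and likewise $\lim_{x \to 0^+} g(x) = 0$. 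Then the fundamental theorem of calculus for absolutely continuous functions gives $\int_0^\infty g'(x)\,\rd x = 0$, which upon rearranging is precisely the desired identity. I would present the density identity first as a one-line computation, then the integrability bookkeeping, then the conclusion.
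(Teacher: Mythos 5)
Your proof is correct, but it takes a genuinely different route from the paper. The paper writes $Q = \sum_{i=1}^{p} Z_i^2$ for i.i.d.\ standard Gaussians and applies Gaussian integration by parts coordinatewise: $\E\left((Z_i^2-1)f(Q)\right) = \E\left(Z_i \tfrac{\partial}{\partial Z_i} f(Q)\right) = \E\left(2Z_i^2 f'(Q)\right)$, and sums over $i$ --- a three-line probabilistic argument. You instead work directly with the $\chi^2_p$ density $\rho_p$, observe the Stein-type density identity $\tfrac{\rd}{\rd x}\bigl(2x\rho_p(x)\bigr) = (p-x)\rho_p(x)$, and integrate by parts on $(0,\infty)$, with a careful treatment of the boundary terms (your argument that $g' \in L^1$ forces the limit of $g$ at infinity to exist, and $g \in L^1$ forces that limit to be zero, is the standard clean way to handle this, and your verification that the stated moment hypotheses suffice is accurate; continuity of $f$ near $0$ handles the other endpoint even for $p=1$). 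The trade-offs: the paper's argument is shorter and delegates all regularity issues to the Gaussian integration-by-parts lemma, but it requires $p$ to be a positive integer; your density computation is longer but works verbatim for any real $p>0$ and makes the verification of the integrability hypotheses fully explicit rather than implicit. Both are valid proofs of the lemma as stated.
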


\begin{proof}[Proof of Lemma \ref{lem:gaussian_IBP_chisquare}]
Note that we can write $Q=\sum_{i=1}^{p}Z_{i}^{2}$, where $Z_{1},Z_2,\ldots,Z_p$ are i.i.d.~standard Gaussian random variables and the equality holds in distribution. Then by the Gaussian integration by parts, we have 
\begin{align*}
	\E\left((Q-p)f(Q)\right) = \sum_{i=1}^{p} \E\left((Z_{i}^{2}-1)f(Q)\right) &= \sum_{i=1}^{p} \E\left(Z_i \frac{\partial}{\partial Z_i}f(Q)\right) \\
	&= \sum_{i=1}^{p} \E\left(2Z_{i}^{2}f'(Q)\right) = \E\left( 2Qf'(Q)\right). 
\end{align*}
The second part is an immediate consequence of the first part. 
\end{proof}

\subsection{Some combinatorial results}

Given a sequence of distinct non-zero real numbers $\gl_{1},\gl_{2},\ldots,\gl_{r}$, we define 
\begin{align*}
	\gL_{k,i} &:= \sum_{S\subseteq [r]\setminus\{i\}, |S|=k}\ \prod_{j\in S} \gl_{j} \text{ and }\\
	\gL_{k}&:= \sum_{S\subseteq [r], |S|=k}\ \prod_{j\in S} \gl_{j} 
\end{align*}
for $i, k\in [r]$, where $[r] = \{1,\dots,r\}$. Define $\gL_{0}\equiv\gL_{0,i}\equiv 1$. Clearly $\gL_{r,i} = 0$ for all $i \in [r]$.

\begin{lem}
\label{lem:combinatorial}
For $i,k \in [r]$, we have 
\begin{align*}
	\gL_{k}-\gL_{k-1,i}\gl_{i} = \gL_{k,i}  \text{ and } \sum_{i=1}^{r} \gL_{k,i}\gl_{i} &= (k+1)\gL_{k+1}.
\end{align*}
\end{lem}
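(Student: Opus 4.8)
The plan is to prove both identities by direct combinatorial bookkeeping on the defining sums; no generating-function machinery or induction is needed, and in fact the hypothesis that the $\gl_j$ are distinct and non-zero plays no role here (it matters only for the later arguments).

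For the identity $\gL_{k}-\gL_{k-1,i}\gl_{i}=\gL_{k,i}$, I would split the sum defining $\gL_{k}$ according to whether a size-$k$ subset $S\subseteq[r]$ contains the index $i$. The subsets with $i\notin S$ contribute exactly $\gL_{k,i}$ by definition. For a subset with $i\in S$, write $S=\{i\}\cup T$ with $T\subseteq[r]\setminus\{i\}$ and $|T|=k-1$; then $\prod_{j\in S}\gl_{j}=\gl_{i}\prod_{j\in T}\gl_{j}$, and summing over all such $T$ gives $\gl_{i}\gL_{k-1,i}$. Hence $\gL_{k}=\gL_{k,i}+\gl_{i}\gL_{k-1,i}$, which is the first claim. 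The boundary conventions $\gL_{0}=\gL_{0,i}=1$ make this valid at $k=1$, and at $k=r$ it reduces to $\gL_{r}=\gl_{i}\gL_{r-1,i}$ since $\gL_{r,i}=0$.

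For $\sum_{i=1}^{r}\gL_{k,i}\gl_{i}=(k+1)\gL_{k+1}$, I would expand the left-hand side as a double sum over pairs $(i,S)$ with $i\in[r]$, $S\subseteq[r]\setminus\{i\}$, $|S|=k$, of the term $\gl_{i}\prod_{j\in S}\gl_{j}=\prod_{j\in S\cup\{i\}}\gl_{j}$. Re-indexing by $U:=S\cup\{i\}$, a size-$(k+1)$ subset of $[r]$, each such $U$ arises from exactly $k+1$ pairs (one for each choice of $i\in U$, with $S=U\setminus\{i\}$), and the summand depends only on $U$; collecting terms gives $(k+1)\sum_{|U|=k+1}\prod_{j\in U}\gl_{j}=(k+1)\gL_{k+1}$.

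Both steps are elementary, so the only thing requiring care — the ``main obstacle,'' such as it is — is applying the boundary conventions consistently (empty products equal $1$, and $\gL_{m}$ or $\gL_{m,i}$ vanishing once the index exceeds the available set size). Alternatively, one can read off the first identity by comparing coefficients of $t^{k}$ in $\prod_{j\in[r]}(1+\gl_{j}t)=(1+\gl_{i}t)\prod_{j\neq i}(1+\gl_{j}t)$, and the second from the fact that the Euler-type operator $\sum_{i}\gl_{i}\,\partial_{\gl_{i}}$ scales the degree-$(k+1)$ symmetric polynomial $\gL_{k+1}$ by $k+1$; but the counting argument above is fully self-contained and is the one I would write up.
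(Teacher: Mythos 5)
Your proof is correct. For the first identity you split the sum defining $\gL_{k}$ according to whether $i\in S$, which is exactly the content of the paper's one-line remark that the claim ``follows from the definitions''; you have simply written out the details, and your boundary checks at $k=1$ and $k=r$ are consistent with the conventions $\gL_{0}=\gL_{0,i}=1$ and $\gL_{r,i}=0$. For the second identity your route genuinely differs from the paper's: you re-index the double sum over pairs $(i,S)$ by $U=S\cup\{i\}$ and observe that each size-$(k+1)$ subset arises from exactly $k+1$ such pairs, yielding $(k+1)\gL_{k+1}$ in a single counting step. The paper instead first records the dual counting fact $\sum_{i=1}^{r}\gL_{k,i}=(r-k)\gL_{k}$ (each size-$k$ subset is avoided by $r-k$ indices), then applies the first identity at level $k+1$ to write $\gL_{k,i}\gl_{i}=\gL_{k+1}-\gL_{k+1,i}$ and sums over $i$ to obtain $r\gL_{k+1}-(r-k-1)\gL_{k+1}=(k+1)\gL_{k+1}$. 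Both arguments are elementary and of comparable length; yours is more direct and does not route the second identity through the first, while the paper's reuses the first identity so that the only new ingredient is the auxiliary count $\sum_{i}\gL_{k,i}=(r-k)\gL_{k}$. Your observation that the distinctness and non-vanishing of the $\gl_{j}$ play no role in this lemma is also accurate.
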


\begin{proof}[Proof of Lemma \ref{lem:combinatorial}]
The first claim follows from the definitions of $\gL_{k,i}$ and $\gL_{k}$. Note that $\gL_{k}$ involves the summation of the product terms of $k$ distinct $\lambda_{i}$'s, which implies that $\sum_{i=1}^{r} \gL_{k,i} = (r-k) \gL_{k}$. Thus we have 
\[
\sum_{i=1}^{r} \gL_{k,i}\gl_{i} = \sum_{i=1}^{r} (\gL_{k+1} - \gL_{k+1,i}) = r \gL_{k+1} - \sum_{i=1}^{r} \gL_{k+1,i} = (k+1) \gL_{k+1}.
\]
\end{proof}

\subsection{Stein equation for weighted sums of independent $\chi^{2}$ distributions}

Let $Q_{i}\sim \chi^{2}_{m_i}, i=1,2,\ldots,r$ be independent chi-squared random variables and $\gl_{1},\gl_{2},\ldots,\gl_{r}$ be a sequence of distinct non-zero real numbers. We consider the random variable
\[ U=\sum_{i=1}^{r}\gl_{i}Q_{i}, \]
which is a weighted sum of independent $\chi^{2}$ random variables.
Define $\mu=\E(U)= \sum_{i=1}^r \gl_{i} m_{i}, \widetilde{U}:=U-\mu, \widetilde{Q_{i}}:=Q_{i}-m_{i}, i=1,2,\ldots,r$. We also define 
\begin{align*}
	\mu_{k}:= \sum_{i=1}^r \gl_{i}^2 \gL_{k-1,i} m_{i} \quad \text{ for } k\ge 1 
\end{align*}
and $\mu_{0}=0$. The main result of this note is the following Stein equation for $\widetilde{U}$.

\begin{thm}[Stein equation for $\widetilde{U}$]
\label{thm:Stein_characterizing_operator_centered}
Let $f : \R \to \R$ be an $r$-th differentiable function such that $\E|f^{(k)}(U)|$ and $\E|U f^{(k)}(U)|, k = 0,1,\dots,r$ are finite. Then we have $\E \widetilde{\cT}f(\widetilde{U}) = 0$, where 
\begin{equation}
\label{eqn:Stein_characterizing_operator_centered}
\widetilde{\cT}f(x) = \sum_{k=0}^{r} (-2)^{k} \big(\mu_{k} +\gL_{k} x\big) f^{(k)}(x).
\end{equation}
\end{thm}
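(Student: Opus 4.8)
The plan is to establish an intermediate identity by repeated integration by parts, moving one order of differentiation at a time, and then telescope. The only analytic input is Lemma~\ref{lem:gaussian_IBP_chisquare}: applying it conditionally on $(Q_j)_{j\neq i}$ to the map $q\mapsto f^{(k)}\big(\sum_{j\neq i}\gl_j Q_j+\gl_i q-\mu\big)$ yields, for each $i\in[r]$ and $0\le k\le r-1$,
\[
\E\left[(Q_i-m_i)f^{(k)}(\widetilde U)\right]=2\gl_i\,\E\left[Q_i f^{(k+1)}(\widetilde U)\right],
\]
which is valid under the stated hypotheses (together with $Q_i\ge 0$ and $\gl_i\neq 0$, these bound the auxiliary quantities $\E|Q_i f^{(k)}(U)|$). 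Writing $\pi_S:=\prod_{j\in S}\gl_j$, I would introduce, for $0\le k\le r$, the remainder
\[
R_k:=(-2)^k\sum_{S\subseteq[r],\,|S|=k+1}\pi_S\,\E\left[\Big(\sum_{i\in S}\widetilde{Q_i}\Big)f^{(k)}(\widetilde U)\right],
\]
and record the equivalent form $R_k=(-2)^k\sum_{|S|=k}\pi_S\,\E\big[(\sum_{i\notin S}\gl_i\widetilde{Q_i})f^{(k)}(\widetilde U)\big]$; the two agree because $\pi_S\gl_i=\pi_{S\cup\{i\}}$ for $i\notin S$, so reindexing by $S'=S\cup\{i\}$ (each $(k{+}1)$-set $S'$ received from each $i\in S'$) carries one sum to the other. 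The first form at $k=r$ is an empty sum, so $R_r=0$; the second form at $k=0$ gives $R_0=\E[\widetilde U f(\widetilde U)]$ since $\pi_{\eset}=1$.

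The crux is the recursion, for $0\le k\le r-1$,
\[
R_k=-(-2)^{k+1}\,\E\left[(\mu_{k+1}+\gL_{k+1}\widetilde U)f^{(k+1)}(\widetilde U)\right]+R_{k+1}.
\]
To get it, start from the first form of $R_k$, apply the integration-by-parts identity to every $i\in S$, and split $Q_i=m_i+\widetilde{Q_i}$ inside $\E[Q_i f^{(k+1)}(\widetilde U)]$. Summing over $i\in S$ and over $|S|=k+1$ with weight $\pi_S$, the $m_i$-part contributes $2(-2)^k\E[f^{(k+1)}(\widetilde U)]\sum_{|S|=k+1}\pi_S\sum_{i\in S}\gl_i m_i$, and since $\sum_{|S|=k+1,\,S\ni i}\pi_S=\gl_i\gL_{k,i}$ (exactly the accounting behind the first part of Lemma~\ref{lem:combinatorial}) the coefficient collapses to $\sum_i\gl_i^2 m_i\gL_{k,i}=\mu_{k+1}$. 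The $\widetilde{Q_i}$-part contributes $2(-2)^k\E\big[f^{(k+1)}(\widetilde U)\sum_{|S|=k+1}\pi_S\sum_{i\in S}\gl_i\widetilde{Q_i}\big]$; here I would run the reindexing in reverse, writing $\sum_{i\in S}\gl_i\widetilde{Q_i}=\widetilde U-\sum_{i\notin S}\gl_i\widetilde{Q_i}$ and using $\sum_{|S|=k+1}\pi_S=\gL_{k+1}$, so this part equals $2(-2)^k\gL_{k+1}\E[\widetilde U f^{(k+1)}(\widetilde U)]+R_{k+1}$ by the second form of $R_{k+1}$. Adding the two parts and using $2(-2)^k=-(-2)^{k+1}$ gives the recursion.

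Finally, telescoping $R_0=\sum_{k=0}^{r-1}(R_k-R_{k+1})+R_r$ together with $R_0=\E[\widetilde U f(\widetilde U)]$ and $R_r=0$ yields $\E[\widetilde U f(\widetilde U)]=-\sum_{k=1}^{r}(-2)^k\E[(\mu_k+\gL_k\widetilde U)f^{(k)}(\widetilde U)]$, which, because $\mu_0=0$ and $\gL_0=1$, is exactly $\E\widetilde{\cT}f(\widetilde U)=0$. I expect the main obstacle to be guessing the correct intermediate quantity $R_k$: the ``loose'' factor surviving $k$ integrations by parts must be organized by $k$-subsets $S$ of $[r]$ in the form $\sum_{i\notin S}\gl_i\widetilde{Q_i}$ (equivalently $\sum_{i\in S'}\widetilde{Q_i}$ over $(k{+}1)$-subsets) so that one further integration by parts simultaneously peels off a clean $(\mu_{k+1}+\gL_{k+1}\widetilde U)f^{(k+1)}$ term and regenerates a remainder of the same shape with index set enlarged by one; the more obvious choice of carrying $\sum_i c_i\E[\widetilde{Q_i}f^{(k)}(\widetilde U)]$ does not close, since iterating in a single coordinate produces powers of one $\gl_i$ rather than the elementary symmetric functions $\gL_k$ and the weights $\mu_k$. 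What remains is the routine verification that the integrability hypotheses licence every conditional application of Lemma~\ref{lem:gaussian_IBP_chisquare}.
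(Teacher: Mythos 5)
Your proof is correct and follows essentially the same route as the paper: conditional application of Lemma~\ref{lem:gaussian_IBP_chisquare}, the split $Q_i=m_i+\widetilde{Q_i}$, and the identity $\gL_{k}-\gl_i\gL_{k-1,i}=\gL_{k,i}$ to peel off $(\mu_{k}+\gL_{k}\widetilde U)f^{(k)}$ at each stage. Your $R_k$ is precisely the paper's remainder term $(-2)^k\sum_i\gl_i\gL_{k,i}\,\E[\widetilde{Q_i}f^{(k)}(\widetilde U)]$ rewritten over subsets, and your explicit telescoping recursion is a cleaner packaging of the induction the paper leaves implicit.
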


\begin{rem}
Stein equation for the non-centered weighted sum $U$ of independent $\chi^{2}$ distributions is given by $\E \cT f(U)=0$, where 
\begin{equation}
\label{eqn:Stein_characterizing_operator_noncentered}
\cT f(x) = \sum_{k=0}^{r} (-2)^{k} \big(\mu_{k} + \gL_{k} x -\gL_{k}\mu\big) f^{(k)}(x).
\end{equation}
\end{rem}

\begin{proof}[Proof of Theorem \ref{thm:Stein_characterizing_operator_centered}]
Take a smooth function $f$. By the integration by parts formula for $\chi^{2}$ distribution in Lemma~\ref{lem:gaussian_IBP_chisquare}, we have 
\begin{align*}
	\E\left (\widetilde{U}f(\widetilde{U}) \right) &= 2\sum_{i=1}^{r} \gl_{i}^{2} \E\left ( Q_{i}f^{(1)}(\widetilde{U}) \right)\\
	&= 2\sum_{i=1}^{r} \gl_{i}^{2}m_{i} \E\left ( f^{(1)}(\widetilde{U}) \right)+ 2\sum_{i=1}^{r} \gl_{i}^{2} \E\left ( \widetilde{Q}_{i}f^{(1)}(\widetilde{U}) \right)\\
	&= 2\mu_1 \E\left ( f^{(1)}(\widetilde{U}) \right)+ 2\gL_{1} \E\left (\widetilde{U} f^{(1)}(\widetilde{U}) \right) - 2\sum_{i=1}^{r} \gl_{i}\gL_{1,i} \E\left ( \widetilde{Q}_{i}f^{(1)}(\widetilde{U}) \right)\\
	&= 2 \E\left ( (\mu_1 + \gL_{1} \widetilde{U}) f^{(1)}(\widetilde{U}) \right) - 2^{2}\sum_{i=1}^{r} \gl_{i}^{2}\gL_{1,i} \E\left ( {Q}_{i}f^{(2)}(\widetilde{U}) \right),
\end{align*}
where the third equality follows from Lemma \ref{lem:combinatorial}. Expanding the last term we have 
\begin{align*}
	&\sum_{i=1}^{r} \gl_{i}^{2}\gL_{1,i} \E\left ( {Q}_{i}f^{(2)}(\widetilde{U}) \right)\\
	&= \sum_{i=1}^{r} \gl_{i}^{2}\gL_{1,i}m_{i} \E\left ( f^{(2)}(\widetilde{U}) \right) + \sum_{i=1}^{r} \gl_{i}^{2}\gL_{1,i} \E\left ( \widetilde{Q}_{i}f^{(2)}(\widetilde{U}) \right)\\
	&= \mu_{2} \E\left ( f^{(2)}(\widetilde{U}) \right) + \gL_{2} \E\left (\widetilde{U} f^{(2)}(\widetilde{U}) \right) - 2\sum_{i=1}^{r} \gl_{i}\gL_{2,i} \E\left ( \widetilde{Q}_{i}f^{(2)}(\widetilde{U}) \right)\\
	&= \mu_{2}\E\left ( (\mu_2 + \gL_{2} \widetilde{U}) f^{(2)}(\widetilde{U}) \right) - 2\sum_{i=1}^{r} \gl_{i}^{2}\gL_{2,i} \E\left ( {Q}_{i}f^{(3)}(\widetilde{U}) \right). 
\end{align*}
Using induction we finally get 
\begin{align*}
	&\E\left (\widetilde{U}f(\widetilde{U}) \right)\\
	&= \sum_{k=1}^{r-1} (-1)^{k-1} 2^{k} \E\left ( \mu_{k} f^{(k)}(\widetilde{U})+\gL_{k} \widetilde{U} f^{(k)}(\widetilde{U}) \right) +(-1)^{r-1} 2^{r} \sum_{i=1}^{r} \gl_{i}^{2}\gL_{r-1,i} \E\left ( {Q}_{i}f^{(r)}(\widetilde{U}) \right)\\
	&= \sum_{k=1}^{r-1} (-1)^{k-1} 2^{k} \E\left( (\mu_{k} +\gL_{k} \widetilde{U}) f^{(k)}(\widetilde{U}) \right) +(-1)^{r-1} 2^{r} \gL_{r}\E\left ((\mu+\widetilde{U})f^{(r)}(\widetilde{U}) \right),
\end{align*}
where the last step follows from Lemma \ref{lem:combinatorial} and $\gL_{r,i} = 0$. Thus $\widetilde{U}$ satisfies the relation $\E \widetilde{\cT}f(\widetilde{U})=0$, where 
\begin{align*}
\widetilde{\cT}f(x) &= xf(x) + \sum_{k=1}^{r-1} (-2)^{k} \big(\mu_{k} +\gL_{k} x\big) f^{(k)}(x) + (-2)^{r} \gL_{r} (\mu+x) f^{(r)}(x).
\end{align*}
Then (\ref{eqn:Stein_characterizing_operator_centered}) follows from the last identity together with $\mu_{0}=0, \gL_{0} = 1$, and $\mu_{r} = \gL_{r} \mu$.
\end{proof}

\bibliographystyle{plain}
\bibliography{stein_chisquares}

\end{document}